\begin{document}

% \mmbox enables macros to survive outside of $ ... $
\newcommand{\mmbox}[1]{\mbox{${#1}$}}
\newcommand{\proj}[1]{\mmbox{{\mathbb P}^{#1}}}
\newcommand{\Cr}{C^r(\Delta)}
\newcommand{\CR}{C^r(\hat\Delta)}
\newcommand{\affine}[1]{\mmbox{{\mathbb A}^{#1}}}
\newcommand{\Ann}[1]{\mmbox{{\rm Ann}({#1})}}
\newcommand{\caps}[3]{\mmbox{{#1}_{#2} \cap \ldots \cap {#1}_{#3}}}
\newcommand{\N}{{\mathbb N}}
\newcommand{\Z}{{\mathbb Z}}
\newcommand{\R}{{\mathbb R}}
\newcommand{\Tor}{\mathop{\rm Tor}\nolimits}
\newcommand{\Ext}{\mathop{\rm Ext}\nolimits}
\newcommand{\Hom}{\mathop{\rm Hom}\nolimits}
\newcommand{\im}{\mathop{\rm Im}\nolimits}
\newcommand{\rank}{\mathop{\rm rank}\nolimits}
\newcommand{\supp}{\mathop{\rm supp}\nolimits}
\newcommand{\arrow}[1]{\stackrel{#1}{\longrightarrow}}
\newcommand{\CB}{Cayley-Bacharach}
\newcommand{\coker}{\mathop{\rm coker}\nolimits}
\newcommand{\m}{{\frak m}}
%%%%%%%%%%%%%%%%%%%%%%%%%%%%%%%%%%%%%%%%%%%%%%%%%%%%%%%%%%%%%%%%%%%%%%%%% new commands - Mehdi
\newcommand{\fitt}{{\rm Fitt}}
\newcommand{\dep}{{\rm depth}}
\newcommand{\C}{{\mathcal{C}}}
\newcommand{\D}{{\mathcal{D}}}
\newcommand{\K}{\mathbb K}
\newcommand{\OT}{R}  % Orlik-Terao ring (R = std notation)
\newcommand{\spn}{{\rm Span }}  % I know! \span is taken
\newcommand{\M}{\mathsf M}
\newcommand{\Ima}{{\rm Im}\,}
\newcommand{\hi}{{\rm ht}}

%%%%%%%%%%%%%%%%%%%%%%%%%%%%%%%%%%%%%%%%%%%%%%%%%%%%%%%%%%%%%%%%%%%%%%%%% for augmented matrices

\makeatletter
\renewcommand*\env@matrix[1][*\c@MaxMatrixCols c]{%
  \hskip -\arraycolsep
  \let\@ifnextchar\new@ifnextchar
  \array{#1}}
\makeatother

%%%%%%%%%%%%%%%%%%%%%%%%%%%%%%%%%%%%%%%%%%%%%%%%%%%%%%

%\DeclareMathOperator{\M}{\mathsf M}
%\newcommand{\text}[1]{\mbox{\rm {#1}}}
\sloppy
\newtheorem{defn0}{Definition}[section]
\newtheorem{prop0}[defn0]{Proposition}
\newtheorem{quest0}[defn0]{Question}
\newtheorem{thm0}[defn0]{Theorem}
\newtheorem{lem0}[defn0]{Lemma}
\newtheorem{corollary0}[defn0]{Corollary}
\newtheorem{example0}[defn0]{Example}
\newtheorem{remark0}[defn0]{Remark}
\newtheorem{prob0}[defn0]{Problem}

\newenvironment{defn}{\begin{defn0}}{\end{defn0}}
\newenvironment{prop}{\begin{prop0}}{\end{prop0}}
\newenvironment{quest}{\begin{quest0}}{\end{quest0}}
\newenvironment{thm}{\begin{thm0}}{\end{thm0}}
\newenvironment{lem}{\begin{lem0}}{\end{lem0}}
\newenvironment{cor}{\begin{corollary0}}{\end{corollary0}}
\newenvironment{ex}{\begin{example0}\rm}{\end{example0}}
\newenvironment{rem}{\begin{remark0}\rm}{\end{remark0}}
\newenvironment{prob}{\begin{prob0}\rm}{\end{prob0}}

\newcommand{\defref}[1]{Definition~\ref{#1}}
\newcommand{\propref}[1]{Proposition~\ref{#1}}
\newcommand{\thmref}[1]{Theorem~\ref{#1}}
\newcommand{\lemref}[1]{Lemma~\ref{#1}}
\newcommand{\corref}[1]{Corollary~\ref{#1}}
\newcommand{\exref}[1]{Example~\ref{#1}}
\newcommand{\secref}[1]{Section~\ref{#1}}
\newcommand{\remref}[1]{Remark~\ref{#1}}
\newcommand{\questref}[1]{Question~\ref{#1}}
\newcommand{\probref}[1]{Problem~\ref{#1}}

\newcommand{\std}{Gr\"{o}bner}
\newcommand{\jq}{J_{Q}}
\def\Ree#1{{\mathcal R}(#1)}

\numberwithin{equation}{subsection}  %% equation numbering style

%\parskip = 4pt

%\begin{singlespace}
\title{Subspace Arrangements as Generalized Star Configurations}
\author{\c{S}tefan O. Toh\v{a}neanu}

\subjclass[2010]{Primary 14N20; Secondary: 65D05, 16N40} \keywords{subspace arrangement, star configuration, interpolation, arithmetic rank. \\ \indent Tohaneanu's Address: Department of Mathematics, University of Idaho, Moscow, Idaho 83844-1103, USA, Email: tohaneanu@uidaho.edu, Phone: 208-885-6234, Fax: 208-885-5843.}

\begin{abstract}
In these notes we show that any projective subspace arrangement can be realized as a generalized star configuration variety. This type of interpolation result may be useful in designing linear codes with prescribed codewords of minimum weight, as well as in answering a couple of questions asked by the author in previous work, about the number of equations needed to define a generalized star configuration.
\end{abstract}
\maketitle

\section{Introduction}

Let $\Lambda=(\ell_1,\ldots,\ell_n)$ be a collection of linear forms (some possibly proportional) in $R:=\mathbb K[x_1,\ldots,x_k]$, where $\mathbb K$ is any field, and $k\geq 2$. Suppose that $\langle \ell_1,\ldots,\ell_n\rangle=\langle x_1,\ldots,x_k\rangle=:\frak m$. Let $I_a(\Lambda)\subset R$ be the ideal generated by all $a$-fold products of the linear forms in $\Lambda$, i.e.
\[
I_a(\Lambda)=\langle \ell_{i_1}\cdots\ell_{i_a}|1\leq i_1<\cdots<i_a\leq n \rangle.
\]
The projective scheme with defining ideal $I_a(\Lambda)$ will be called a {\em generalized star configuration scheme (GSCS) of size $a$ and support $\Lambda$}. The variety (subspace arrangement) of $\mathbb P^{k-1}$ with defining ideal $\sqrt{I_a(\Lambda)}$ will be called {\em generalized star configuration variety (GSCV) of size $a$ and support $\Lambda$}, and it will be denoted $\mathcal V_a(\Lambda)$.

Beginning of Section 2 in \cite{To1} gives that for any $a=1,\ldots, n$, one has $$\sqrt{I_a(\Lambda)}=\bigcap_{1\leq i_1<\cdots<i_{n-a+1}\leq n}\langle \ell_{i_1},\ldots,\ell_{i_{n-a+1}}\rangle.$$ So, if any $k$ of the linear forms of $\Lambda$ are linearly independent, then for $n-k+2\leq a\leq n$, $\mathcal V_a(\Lambda)$ is a codimension $(n-a+1)-$ star configuration, according to \cite{GeHaMi}. These already known varieties will be called {\em usual} star configurations.

GSCV's, and more generally GSCS's, are strongly related to coding theory. For example, their dimensions are determined by the generalized Hamming weights of the linear code built on $\Lambda$; for more details and various other properties, see \cite{AnGaTo}. The usual star configurations in fact correspond to Maximum Distance Separable codes.

In the first part of these notes we show that any projective subspace arrangement can be interpolated by GSCV's. This is a multivariate type of interpolation and as any interpolation this can be applied to signal processing, or computer aided design, just to name a few. This technique may be used in designing linear codes with prescribed maximal subcodes of given weight, and more specifically, to design codes with prescribed codewords of minimum weight.

In the second part we study the arithmetic rank of GSCV's. We answer a couple of questions mentioned in \cite{To2}, and we make some comments from our perspective in regard to the arithmetic rank of projective subspace arrangements.

\section{Interpolating with Generalized Star Configuration Varieties}

Let $\K$ be an infinite field. Let $V=V_1\cup\cdots\cup V_m\subset\mathbb P_{\mathbb K}^{k-1}$ be a subspace arrangement of $m$ irreducible components such that $V_1\cap\cdots\cap V_m=\emptyset$. Suppose that the codimension of each irreducible component is $c_i, i=1,\ldots,m$. In the coordinate ring $R:=\mathbb K[x_1,\ldots,x_k]$, the defining ideals $I(V_i)$ are prime ideals minimally generated by $c_i$ linear forms, $$I(V)=I(V_1)\cap\cdots\cap I(V_m),$$ and if $\frak m:=\langle x_1,\ldots,x_k\rangle$ is the irrelevant maximal ideal, we have $I(V_1)+\cdots+I(V_m)=\frak m$. Because of this last condition we are going to say that $V$ is {\em essential}.

\begin{thm}\label{mixdim} Let $V$ be an essential subspace arrangement as above. Then there exist a collection of linear forms $\Lambda=(\ell_1,\ldots,\ell_n)$ where $\ell_i\in R$ generating $\frak m$, and an $a\in\{1,\ldots,n\}$, such that $V=\mathcal V_a(\Lambda)$.
\end{thm}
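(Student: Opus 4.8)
The plan is to build the support $\Lambda$ so that the key identity $\sqrt{I_a(\Lambda)}=\bigcap\langle\ell_{i_1},\dots,\ell_{i_{n-a+1}}\rangle$ from \cite{To1} turns into the primary-decomposition-like statement $I(V)=\bigcap_j I(V_j)$. Observe that $I(V_j)$ is generated by $c_j$ linear forms, say $I(V_j)=\langle\ell_{j,1},\dots,\ell_{j,c_j}\rangle$. The natural first attempt is to let $\Lambda$ be the concatenation of all these lists, so $n=\sum_j c_j$, and to hope that an appropriate intersection of $(n-a+1)$-subsets of $\Lambda$ reproduces $\bigcap_j I(V_j)$. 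The obstruction is that the codimensions $c_j$ need not be equal: a fixed size $n-a+1$ will over-cut the small components and under-cut the large ones. So the real content is a normalization step that makes all components ``look'' the same size.

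The fix I would use is \emph{padding}. Fix $c:=\max_j c_j$. For each component $V_j$, augment its defining linear forms $\ell_{j,1},\dots,\ell_{j,c_j}$ by $c-c_j$ additional generic linear forms drawn from $\frak m$, and \emph{repeat} these extra forms if needed so that, after padding, component $j$ contributes some fixed number of forms while still satisfying $\langle\,\text{its forms}\,\rangle=\frak m$ (this is where $\K$ infinite is used: generic linear forms in $k\geq 2$ variables span $\frak m$, and finitely many genericity conditions can be met simultaneously). The point of allowing repeated/proportional forms — permitted in the definition of $\Lambda$ — is that it lets us pad without changing which linear subspace is cut out by any sub-collection that ``sees'' all the original generators of some $I(V_j)$. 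After padding, set $n$ to the total number of forms and choose $a$ so that $n-a+1=c$ (equivalently $a=n-c+1$); then $\sqrt{I_a(\Lambda)}=\bigcap_{|S|=c}\langle\ell_s:s\in S\rangle$, where $S$ ranges over $c$-element subsets.

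The heart of the argument is then a combinatorial/geometric check that this intersection equals $I(V)$. For the inclusion $\subseteq$: any $c$-subset $S$ either lies entirely within the padded block of a single component $j$ — in which case, by the padding construction, $\langle\ell_s:s\in S\rangle\supseteq I(V_j)$ when $S$ contains all $c_j$ original generators, and otherwise $\langle\ell_s:s\in S\rangle$ is generic of codimension $c$ hence contains some component's ideal only after we arrange the padding carefully — or $S$ straddles two or more blocks, in which case genericity forces $\langle\ell_s:s\in S\rangle=\frak m$, contributing nothing to the intersection. So every term in the intersection contains some $I(V_j)$, giving $\bigcap_S\langle\ell_s\rangle\supseteq\bigcap_j I(V_j)=I(V)$; wait — the inclusion goes the other way, so I must instead ensure that \emph{for each $j$} the ideal $I(V_j)$ appears exactly as one of the terms $\langle\ell_s:s\in S_j\rangle$ (take $S_j$ to be the $c_j$ true generators plus $c-c_j$ of the padding forms chosen to lie in $I(V_j)$ — impossible if $I(V_j)$ has codimension $c_j<c$, so instead the padding forms for block $j$ must be chosen \emph{inside} nothing, forcing us to accept $\langle\ell_s:s\in S_j\rangle=\frak m$). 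This sign-of-inclusion tension is the genuinely delicate point, and the correct resolution is to pad $V_j$ not with generic forms but with the generators of \emph{other} components (or of $\frak m$) arranged so that the minimal-codimension terms of the intersection are precisely the $I(V_j)$; essentiality $I(V_1)+\cdots+I(V_m)=\frak m$ guarantees enough forms are available to do this.

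I expect the main obstacle to be exactly this bookkeeping: choosing the padding forms and the size $a$ so that the $c$-subsets of $\Lambda$ producing \emph{minimal} primes are exactly the $I(V_j)$ (each appearing once) and all other $c$-subsets produce $\frak m$ or an ideal containing some $I(V_j)$, so that the radical ideal $\sqrt{I_a(\Lambda)}$ collapses to $\bigcap_j I(V_j)=I(V)$. Once the configuration is set up correctly, invoking the identity from \cite{To1} and taking radicals finishes the proof, with the genericity afforded by $\K$ infinite used to guarantee that ``mixed'' subsets span $\frak m$ and that no unintended containments occur.
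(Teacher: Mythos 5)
Your proposal stops short of the actual construction: the final paragraph defers exactly the point that constitutes the proof (``choosing the padding forms and the size $a$ so that \dots''), and the concrete choices you do commit to would fail. Padding each block only up to $c:=\max_j c_j$ and taking $n-a+1=c$ does not work: a $c$-subset that straddles two blocks generates an ideal of height at most $c\leq k-1$, so it can \emph{never} equal $\frak m$, contrary to your claim that ``genericity forces $\langle\ell_s:s\in S\rangle=\frak m$.'' Such mixed subsets produce minimal primes of $I_a(\Lambda)$ containing no $I(V_j)$, hence extra components in $\mathcal V_a(\Lambda)$ and the failure of $I(V)\subseteq\sqrt{I_a(\Lambda)}$; the paper's last example (revisiting Example \ref{example}) exhibits precisely this phenomenon when one tries $n-a+1=3$ there. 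Also, your parenthetical assertion that padding forms ``chosen to lie in $I(V_j)$'' is ``impossible if $c_j<c$'' is a misconception: nothing prevents adding further linear forms from the $c_j$-dimensional span of the generators of $I(V_j)$ --- they are linearly dependent on the originals, which the definition of $\Lambda$ explicitly tolerates (as you yourself note), and over an infinite field one can choose them so that any $c_j$ of the padded forms still generate $I(V_j)$.

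That in-block padding is exactly the missing idea, but with a different uniform size: the paper pads every block $\Lambda_i$ to $\aleph:=1+\sum_{i=1}^m(c_i-1)$ linear forms \emph{inside} $I(V_i)$, any $c_i$ of which generate $I(V_i)$, and sets $a=n-\aleph+1$. Then each full block $\Lambda_i$ is itself an $(n-a+1)$-subset generating $I(V_i)$, which gives $\sqrt{I_a(\Lambda)}\subseteq\bigcap_i I(V_i)$; and for the reverse inclusion a pigeonhole count does the work you were hoping genericity would do: writing an arbitrary $\aleph$-subset as taking $c_i-\varepsilon_i$ forms from $\Lambda_i$, one gets $\sum_i\varepsilon_i=\sum_i c_i-\aleph=m-1$, so some $\varepsilon_{i_0}\leq 0$, i.e.\ the subset contains at least $c_{i_0}$ forms from a single block and its ideal contains $I(V_{i_0})$. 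Thus every term in the To1 intersection formula contains some $I(V_j)$, and equality follows --- no genericity across blocks, no appeal to essentiality beyond $\Lambda$ generating $\frak m$, and no case analysis of ``mixed'' subsets is needed. Without this (or an equivalent) choice of block size and in-block padding, your argument has no mechanism to rule out the unwanted mixed primes, so the proof is incomplete.
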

\begin{proof} For each $i=1,\ldots,m$, let $\Lambda_i$ be a collection of $\displaystyle\aleph:=1+\sum_{i=1}^m(c_i-1)$ linear forms such that each $c_i$ of them generate $I(V_i)$. Since $\mathbb K$ is infinite, $\Lambda_i$'s exist.

Let $\displaystyle\Lambda:=\bigcup_{i=1}^m\Lambda_i$ be the collection of all these $n:=|\Lambda|\leq m\aleph$ linear forms.

Let $a=n-\aleph+1$, hence $\aleph=n-a+1$.

As mentioned in the Introduction, any minimal prime of $I_a(\Lambda)$ is generated (not minimally) by $n-a+1$ linear forms of $\Lambda$, and conversely, any ideal generated by $n-a+1$ linear forms of $\Lambda$ is a prime ideal containing $I_a(\Lambda)$. Therefore for all $i=1,\ldots,m$, $I(V_i)$ is a minimal prime of $I_a(\Lambda)$, hence $$\sqrt{I_a(\Lambda)}\subseteq I(V_1)\cap\cdots\cap I(V_m).$$

Consider some collection of $n-a+1=\aleph$ linear forms from $\Lambda$, and suppose that this collection is obtained by putting together $c_1-\varepsilon_1, \varepsilon_1\geq 0$ linear forms from $\Lambda_1$, $c_2-\varepsilon_2, \varepsilon_2\geq 0$ linear forms from $\Lambda_2$, and so forth, $c_m-\varepsilon_m, \varepsilon_m\geq 0$ linear forms from $\Lambda_m$. Then $$\varepsilon_1+\varepsilon_2+\cdots+\varepsilon_m=\sum_{i=1}^mc_i-\aleph=m-1.$$ Hence at least one of the $\varepsilon_i$'s must be zero.

This is saying that any collection of $n-a+1$ linear forms from $\Lambda$ contains at least $c_{i_0}$ linear forms from the same $\Lambda_{i_0}$. Since these $c_{i_0}$ linear forms will generate $I(V_{i_0})$, we conclude that any minimal prime of $I_a(\Lambda)$ contains one of the $I(V_i)$. Therefore, $$I(V_1)\cap\cdots\cap I(V_m)\subseteq \sqrt{I_a(\Lambda)},$$ which completes the proof.
\end{proof}

\begin{ex}\label{example} Let $V$ be the subspace arrangement with defining ideal $I(V)=\langle x,z,w\rangle\cap\langle x,y\rangle\subset R:=\mathbb C[x,y,z,w]$. We have $m=2$, $c_1=3$, $c_2=2$, hence $\aleph=4$. We can pick $$\Lambda_1=\{x,z,w,x+z+w\}\mbox{ and }\Lambda_2=\{x,y,x+y,x-y\}.$$ Observe that any three of the linear forms in $\Lambda_1$ generate $\langle x,z,w\rangle$, and any two of the linear forms in $\Lambda_2$ generate $\langle x,y\rangle$. With $\Lambda=\{x,z,w,x+z+w,y,x+y,x-y\}$, we have $n=7$ and $a=7-4+1=4$, and conclude that $\mathcal V_4(\Lambda)$ interpolates $V$.
\end{ex}

\subsection{The connection with coding theory.} Let $\mathcal C$ be an $[n,k]$-linear code: $\mathcal C=Im(\phi_G)$, where $\phi_G:\mathbb K^k\rightarrow \mathbb K^n$ is the multiplication by a $k\times n$ matrix $G$ (called {\em generating matrix}), of rank $k$. Let $\mathcal D\subseteq \mathcal C$ be a subcode. The support of $\mathcal D$ is $$Supp(\mathcal D):=\{i :\exists (y_1,\ldots,y_n)\in \mathcal D \mbox{ with }y_i\neq 0\}.$$ Let $m(\mathcal D):=|Supp(\mathcal D)|$ be the cardinality of the support of $\mathcal D$.

Let $V_{\mathcal D}:=\phi_G^{-1}(\mathcal D)$ be the corresponding linear subspace of $\mathbb K^k$, the preimage of $\mathcal D$ under the injective linear map $\phi_G$.

Suppose $\ell_1,\ldots,\ell_n\in R:=\mathbb K[x_1,\ldots,x_k]$ are the linear forms dual to the columns of the matrix $G$: for $i=1,\ldots,n$, the $i-$th column of $G$, $\displaystyle\left[\begin{array}{c}a_{1i}\\a_{2i}\\\vdots\\a_{ki}\end{array}\right]$ has dual linear form $\ell_i:=a_{1i}x_1+a_{2i}x_2+\cdots+a_{ki}x_k$.

If $m(\mathcal D)=s$, then all the elements of $\mathcal D$ have the same $n-s$ components $i_1,\ldots,i_{n-s}\in \{1,\ldots,n\}$ equal to zero. So, $V_{\mathcal D}\subseteq V(\ell_{i_1},\ldots,\ell_{i_{n-s}})$, the common zero locus of these linear forms, and therefore, in terms of defining ideals in $R$, $$\langle \ell_{i_1},\ldots,\ell_{i_{n-s}}\rangle\subseteq I(V_{\mathcal D}).$$

\medskip

Let $V_1,\ldots, V_m$ be the components of an essential subspace arrangement $V$ as above, and let $\Lambda=\{\ell_1,\ldots,\ell_n\}$ be the set of linear forms from the proof of Theorem \ref{mixdim}.

Let $\mathcal C_{\Lambda}$ be the $[n,k]-$linear code with generating matrix $G_{\Lambda}$, whose columns are dual to the linear forms of $\Lambda$. For each $i=1,\ldots,m$, we have $V_i=V(\Lambda_i)$. Let $$\mathcal D_i:=\phi_{G_{\Lambda}}(V_i), i=1,\ldots,m.$$ These are subcodes of $\mathcal C_{\Lambda}$ of support size $$m(\mathcal D_i)\leq |\Lambda|-|\Lambda_i|=n-\aleph=a-1.$$ We have inequality because it may be possible that we have chosen an $\ell\in \Lambda_j\setminus \Lambda_i, j\neq i$, yet $\ell(V_i)=0$.

Let $\mathcal D$ be some nonzero subcode of support size $s\leq a-1$. Then, as we have seen before, $\langle \ell_{i_1},\ldots,\ell_{i_{n-s}}\rangle\subseteq I(V_{\mathcal D})$, for some $\ell_{i_1},\ldots,\ell_{i_{n-s}}\in\Lambda$. Since $s\leq a-1$, then $n-s\geq n-a+1$, and therefore, from proof of Theorem \ref{mixdim}, we have $I(V_{\mathcal D})\supseteq I(V_{i_0})$, for some $i_0\in\{1,\ldots,m\}$. Consequently, $V_{\mathcal D}\subseteq V_{i_0}$, and hence $\mathcal D\subseteq \mathcal D_{i_0}$. We just proved the following result.

\begin{prop} With the notations and conditions of this section, we have that $\phi_{G_{\Lambda}}(V_i), i=1,\ldots m$ are the maximal subcodes of $\mathcal C_{\Lambda}$ of support size $\leq a-1$.
\end{prop}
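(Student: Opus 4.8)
The plan is to package together the pieces already established in the section and verify the two-sided containment that makes the $\mathcal D_i$ exactly the maximal subcodes of support size $\le a-1$. First I would observe that each $\mathcal D_i := \phi_{G_\Lambda}(V_i)$ is indeed a subcode of $\mathcal C_\Lambda$ of support size $\le a-1$: this is precisely the computation $m(\mathcal D_i)\le |\Lambda|-|\Lambda_i| = n-\aleph = a-1$ done just before the statement, using that $V_i = V(\Lambda_i)$ means every $\ell\in\Lambda_i$ vanishes on $V_i$, so all coordinates indexed by $\Lambda_i$ are zero on $\mathcal D_i$. So the $\mathcal D_i$ lie in the family of subcodes whose maximality we want to assert.

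Next I would prove maximality, i.e. that any nonzero subcode $\mathcal D\subseteq\mathcal C_\Lambda$ with $m(\mathcal D)=s\le a-1$ is contained in some $\mathcal D_{i_0}$. This is the chain of implications already spelled out: since all elements of $\mathcal D$ share $n-s\ge n-a+1$ zero coordinates, say indexed by $\ell_{i_1},\ldots,\ell_{i_{n-s}}\in\Lambda$, we get $\langle\ell_{i_1},\ldots,\ell_{i_{n-s}}\rangle\subseteq I(V_{\mathcal D})$ by the dictionary between subcodes and their preimages. Shrinking to a sub-collection of exactly $n-a+1$ of these forms (which only makes the ideal smaller, hence the containment still holds) and invoking the key counting argument from the proof of Theorem~\ref{mixdim} — that any $n-a+1 = \aleph$ forms from $\Lambda$ contain all $c_{i_0}$ forms of some $\Lambda_{i_0}$ — we conclude $I(V_{i_0})\subseteq I(V_{\mathcal D})$, so $V_{\mathcal D}\subseteq V_{i_0}$, and applying $\phi_{G_\Lambda}$ gives $\mathcal D\subseteq\mathcal D_{i_0}$. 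Thus every subcode of support size $\le a-1$ is dominated by one of the $\mathcal D_i$, and since the $\mathcal D_i$ themselves have support size $\le a-1$, they are exactly the maximal members of this family.

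Finally I would note that maximality here should be read with respect to inclusion among subcodes of support size $\le a-1$, and that distinct $V_i$ give genuinely different $\mathcal D_i$ because $V_1\cap\cdots\cap V_m=\emptyset$ (essentiality) prevents one $V_i$ from containing another, so no $\mathcal D_i$ is redundant. The main obstacle — really the only subtle point — is the passage from a collection of $n-s$ common zero coordinates to a sub-collection of exactly $n-a+1=\aleph$ of them: one must be careful that this restriction is legitimate (it is, since dropping generators only weakens the ideal) and that after restriction the pigeonhole/counting bound $\varepsilon_1+\cdots+\varepsilon_m = m-1$ from Theorem~\ref{mixdim} genuinely forces some $\varepsilon_{i_0}=0$, i.e. all of $\Lambda_{i_0}$ is present. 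Everything else is assembling already-proved statements.
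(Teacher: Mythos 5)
Your proposal is correct and follows essentially the same route as the paper: the bound $m(\mathcal D_i)\le |\Lambda|-|\Lambda_i|=a-1$, then the observation that a subcode of support size $s\le a-1$ has $n-s\ge n-a+1$ common zero coordinates, so the pigeonhole count from the proof of Theorem~\ref{mixdim} yields $I(V_{i_0})\subseteq I(V_{\mathcal D})$, hence $\mathcal D\subseteq\mathcal D_{i_0}$. (Only your closing aside is slightly off: the $\mathcal D_i$ are pairwise incomparable because the $V_i$ are distinct irreducible components of $V$, not because $V_1\cap\cdots\cap V_m=\emptyset$; this does not affect the argument.)
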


Suppose $V$ is a union of points in $\mathbb P^{k-1}$; in other words, $V_i, i=1,\ldots,m$ are one-dimensional linear subspaces of $\mathbb K^k$. Then $V_{\mathcal D}$ from above must equal $V_{i_0}$. This leads to the conclusion that $\mathcal D_i, i=1,\ldots,m$ are the equivalence classes (under nonzero scalar multiplication) of minimal codewords of weight $\leq a-1$; see for comparison \cite[Proposition 4.1]{To1}.

\medskip

Still under the assumption that $V$ is a set of points, if in the proof of Theorem \ref{mixdim} we pick the $\Lambda_i$'s such that for all $i\neq j$, if $\ell\in \Lambda_j\setminus\Lambda_i$, then $\ell\notin I(V_i)$, then as observed before, $m(\mathcal D_i)=a-1$, and therefore $\mathcal D_i=\phi_{G_{\Lambda}}(V_i),i=1,\ldots,m$ are precisely \underline{all} the projective codewords of minimum weight of $\mathcal C_{\Lambda}$; the minimum weight (or distance) is $a-1$.

\subsection{Interpolating points in $\mathbb P^2$.} In this subsection we focus our attention towards interpolating points in the (projective) plane. Let $X:=\{P_1,\ldots,P_m\}\subset\mathbb P^2$, be a subset of $m$ distinct points, not all collinear.

If we use Theorem \ref{mixdim}, a priori we would pick $\aleph:=m+1$ generic (projective) lines passing through $P_1$, then another set of $\aleph$ generic lines through $P_2$, and so forth. So $n=m(m+1)$ and $a=n-\aleph+1=m^2$. This interpolation does the trick, yet it is not that effective in terms of $n$, which is the size of $\Lambda$. This idea doesn't use the geometry of $X$, in any way. For example, the linear form defining the line connecting $P_1$ and $P_2$ should belong to both $\Lambda_1$ and $\Lambda_2$. And of course, if $P_1$, $P_2$, and $P_3$ are collinear, this line should belong to $\Lambda_3$ as well. Also, when defining GSCV's we can allow some of the linear forms in $\Lambda$ to be proportional, whereas the proof of Theorem \ref{mixdim} doesn't seem to work with this loosen condition. Still, this naive interpolation gives the clue to what one should look for to obtain a better interpolation: the $n=m(m+1)$ linear forms of $\Lambda$ define a line arrangement in $\mathbb P^2$ whose singularities of maximum multiplicity (equal to $\aleph$) are precisely the points of $X$. Then, from \cite[Lemma 2.2]{To1}, $\sqrt{I_{n-\aleph+1}(\Lambda)}=I(X)$.

With the above discussions, the goal is to find a (better) line arrangement in $\mathbb P^2$ whose singularities of maximum multiplicity are exactly the points of $X$. Then apply \cite[Lemma 2.2]{To1} to interpolate $X$.

Let us denote $\mathcal A_X$ to be the multi arrangement in $\mathbb P^2$ constructed in the following way. We pick an ordering of the points of $X=\{P_1,\ldots,P_m\}$. For $1\leq i<j\leq m$ consider the line $\ell_{i,j}$ connecting the points $P_i$ and $P_j$. If such a line has $s$ points of $X$ on it, we consider it $s-1$ times. Let $\Lambda_X$ be the collection of all of the linear forms (considered with repetitions) defining these lines.

$\bullet$ Let $\mathcal X$ be the line arrangement in $\mathbb P^2$ with lines dual to the points of $X$, i.e., if $[a,b,c]\in X$, then $V(ax+by+cz)\in \mathcal X$. Then, under this duality, collinear points of $X$ become concurrent lines of $\mathcal X$. Then the size of $\Lambda_X$ is the sum of the M\"{o}bius function values at the rank 2 elements in the intersection lattice of $\mathcal X$ (also known as the intersection points of $\mathcal X$). In other words
\[
\#(\Lambda_X)=\frac{\pi_{\mathcal X}(1)}{2}-1,
\] where $\pi_{\mathcal X}(t)$, denotes the Poincar\'{e} polynomial of $\mathcal X$.

\medskip

$\bullet$ Denote $Sing(\mathcal A_X)$ the set of the intersection points of the lines of $\mathcal A_X$, and if $Q\in Sing(\mathcal A_X)$, denote with $\nu(Q,\mathcal A_X)$ the number of lines of $\mathcal A_X$, counted with multiplicity, that pass through $Q$. Obviously, $X\subseteq Sing(\mathcal A_X)$.

Under the same duality at the previous bullet, intersection points lying on a line of $\mathcal X$ correspond to lines of $\mathcal A_X$ passing through the corresponding point of $X$. But the sum of the M\"{o}bius function values at all these points on a line of $\mathcal X$ is exactly $|\mathcal X|-1=m-1$. In other words $$\nu(P_i,\mathcal A_X)=m-1,i=1,\ldots,m.$$

\medskip

$\bullet$ Next we show that if $Q\in Sing(\mathcal A_X)\setminus X$, then $\nu(Q,\mathcal A_X)< m-1$. Let $Q$ be such a point. Then $Q=\ell_1\cap\cdots\cap\ell_u$, where $\ell_1,\ldots,\ell_u$ are distinct lines of $\mathcal A_X$, hence $u\geq 2$. Suppose for each $i=1,\ldots,u$, the line $\ell_i$ has $n_i+1, n_i\geq 1$ points of $X$ on it. Since $Q\notin X$, these $u$ subsets of points of $X$ are all disjoint. Therefore
\[
\nu(Q,\mathcal A_X)=n_1+\cdots+n_u\leq |X|-u=m-u\leq m-2.
\]

\medskip

Now putting everything together, we have $$\mathcal V_{\#(\Lambda_X)-(m-1)+1}(\Lambda_X)=X.$$ Obviously $\displaystyle \#(\Lambda_X)\leq{{m}\choose{2}}$, so the interpolating GSCV has support size at most half of the support size of the GSCV considered in the proof of Theorem \ref{mixdim}.

\begin{ex}\label{example2} Let $$X=\{[0,0,1],[0,1,1],[0,2,1],[1,0,1],[1,1,1]\}.$$ Then $$\mathcal X=\{V(z),V(y+z), V(2y+z), V(x+z),V(x+y+z)\}.$$ From \cite[Lemmas 3.1 and 3.2]{Sc}, the primary decomposition of $I_{5-1}(\Sigma),$ where $\Sigma=(z,y+z,2y+z,x+z,x+y+z)$, will give all the intersection points of $\mathcal X$ and their M\"{o}bius function values, and therefore, dually we will obtain all the linear forms in $\Lambda_X$, and how many times they occur. The following calculations have been performed with Macaulay2 (\cite{GrSt}).

$$\langle y,z\rangle^2\cap\langle y,x+z\rangle \cap\langle z,x+y\rangle \cap\langle z,x\rangle \cap\langle y+z,x+z\rangle \cap\langle y+z,x\rangle \cap\langle 2y+z,x+z\rangle \cap\langle 2y+z,2x+z\rangle.$$

The first is the ideal of the point $[1,0,0]$, and it has multiplicity 2. So the linear form $x$ shows up twice in $\Lambda_X$. Doing this for all the other ideals we obtain

$$\Lambda_X=(x,x,x-z,x-y,y,x+y-z,y-z,2x+y-2z,x+y-2z).$$ We have indeed $$\sqrt{I_6(\Lambda_X)}=\langle x,y\rangle\cap \langle x,y-z\rangle \cap \langle x,y-2z\rangle\cap\langle y, x-z\rangle\cap\langle x-z,y-z\rangle.$$ The points of $X$ are the intersection points in the picture below where 4 lines intersect (the double line is counted as two lines).
\begin{center}
\epsfig{file=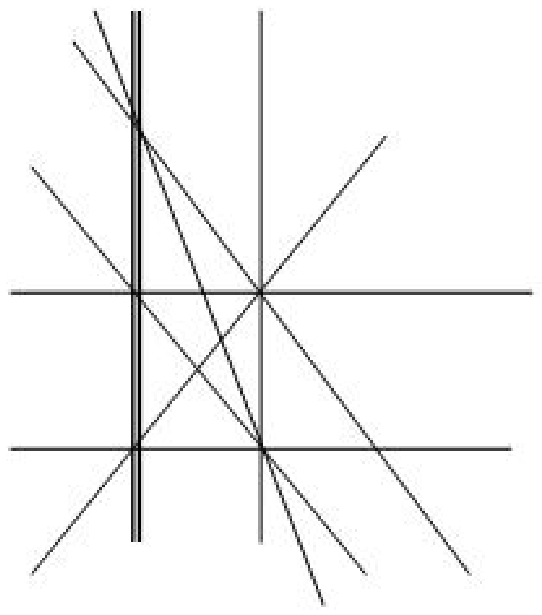,height=2.5in,width=2in}
\end{center}
\end{ex}

\section{Arithmetic rank of GSCS's}

Let $R$ be a commutative unitary Noetherian ring and let $I$ be a proper ideal of $R$. Suppose ${\rm ht}(I)=m$. Then $I$ is said to be {\em a set-theoretic complete intersection (s.t.c.i)} if there exist $f_1,\ldots,f_m\in I$ such that $\sqrt{I}=\sqrt{\langle f_1,\ldots,f_m\rangle}$. A variety is called {\em set-theoretic complete intersection} if its defining ideal has this property. The {\em arithmetic rank} of an ideal $I$, denoted $ara(I)$, is the minimum number of elements in $I$ that generate $I$ up to its radical ideal. So $I$ is a set-theoretic complete intersection if and only if $ara(I)={\rm ht}(I)$ (in general one has $\geq$ happening).

\medskip

\noindent {\bf Hartshorne's Example.} In the nineteenth century, Kronecker and Cayley conjectured that any complex variety in $\mathbb P^3$ is set-theoretic complete intersection. In \cite{Ha}, Hartshorne gives the following counterexample: consider the variety with defining ideal $I:=\langle x_1,x_2\rangle\cap\langle x_3,x_4\rangle$. Because the third local cohomology module of $\mathbb C[x_1,\ldots,x_4]$ supported at $I$ is not zero, the arithmetic rank is $ara(I)=3$, strictly greater than the height of $I$ which is ${\rm ht}(I)=2$. So $I$ is not a set-theoretic complete intersection.

\medskip

\begin{rem}\label{Hart} Hartshorne's Example is a projective subspace arrangement, therefore it can be interpolated by a GSCV. Applying the proof of Theorem \ref{mixdim}, we consider $\Lambda=(x_1,x_2,x_1+x_2,x_3,x_4,x_3+x_4)\subset \mathbb C[x_1,\ldots,x_4]$. Then $$\sqrt{I_4(\Lambda)}=\langle x_1,x_2\rangle\cap\langle x_3,x_4\rangle.$$ This calculation answers negatively a question asked in \cite{To2}, whether or not \underline{any} GSCS is set-theoretic complete intersection.
\end{rem}

\medskip

The next result presents an upper-bound on the arithmetic rank of any GSCS (or GSCV).

\begin{thm}\label{ara} Let $\Lambda=(\ell_1,\ldots,\ell_n)$, where $\ell_i\in R:=\mathbb K[x_1,\ldots,x_k]$ are linear forms generating the maximal ideal $\frak m$. Let $a\in\{1,\ldots,n\}$. Then $$ara(I_a(\Lambda))\leq n-a+1.$$
\end{thm}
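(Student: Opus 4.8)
The plan is to prove the bound $ara(I_a(\Lambda)) \le n-a+1$ by exhibiting $n-a+1$ explicit elements of $I_a(\Lambda)$ whose radical equals $\sqrt{I_a(\Lambda)}$. Recall from the introduction that $\sqrt{I_a(\Lambda)} = \bigcap \langle \ell_{i_1},\ldots,\ell_{i_{n-a+1}}\rangle$, the intersection taken over all size-$(n-a+1)$ subsets of $\{1,\ldots,n\}$. Set $r := n-a+1$. The natural candidates are the ``power-sum'' type elements: for $j = 1,\ldots, r$, let
\[
g_j := \sum_{1 \le i_1 < \cdots < i_{a} \le n} (\ell_{i_1}\cdots \ell_{i_a})^{\, j} \quad\text{or, more robustly,}\quad g_j := \sum_{t=1}^{n} \ell_t^{\, a + (j-1)\cdot(\text{something})},
\]
but the cleanest choice is to work with the $n$ quantities $p_j = \sum_{t=1}^n \ell_t^{\,j}$ is not quite right either — instead I would use the standard trick from the theory of arithmetic rank of products of linear forms (à la Eisenbud--Evans / the Schmitt--Vogel lemma).

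The cleanest route is the \emph{Schmitt--Vogel lemma}: if a radical ideal $J$ has a generating set $P$ that can be partitioned into $r$ subsets $P_1,\ldots,P_r$ such that (i) $|P_1| = 1$, and (ii) for each $\ell > 1$ and each pair of distinct $p, p'' \in P_\ell$, there exist $\ell' < \ell$ and $p' \in P_{\ell'}$ with $p p'' \in \sqrt{\langle p' \rangle + (\text{ideal gen. by } P_1 \cup \cdots \cup P_{\ell-1})}$, then $ara(J) \le r$, with the $r$ defining elements being $q_\ell = \sum_{p \in P_\ell} p$. Here I would take $P$ to be the set of all $a$-fold products $\ell_{i_1}\cdots\ell_{i_a}$, and design the partition $P_\ell$ by grouping these monomials according to some combinatorial statistic — the most natural being $\ell = \max\{i_1,\ldots,i_a\} - a + 1 \in \{1,\ldots,n-a+1\}$, i.e., grouping by the largest index appearing. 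First I would verify $|P_1| = 1$: the only $a$-fold product with largest index $a$ is $\ell_1\cdots\ell_a$. Then for the inductive condition: given two distinct $a$-products $p, p''$ both with maximal index $i_r$ (so $\ell = i_r - a + 1$), their product $p p''$ involves $\ell_{i_r}^2$ and various other factors; I need to locate within $P_1\cup\cdots\cup P_{\ell-1}$ (products whose max index is $< i_r$) an element $p'$ such that $p p'' \in \sqrt{\langle p'\rangle + \cdots}$. The key observation is that since $p \ne p''$ but both have the same support-size $a$ and same max element, they must differ in some smaller index, and one can peel off a suitable sub-product.

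The main obstacle I expect is verifying the Schmitt--Vogel condition cleanly — that is, for any two distinct $a$-fold products with the same ``level'' $\ell$, producing the witness $p'$ at a strictly lower level lying in the right radical. The combinatorics here needs care: one wants that $p \cdot p''$ is divisible (up to radical, modulo lower levels) by some $a$-fold product with strictly smaller maximum index. This should follow because $p$ and $p''$ together use strictly more than $a$ distinct linear forms (as they are distinct products of $a$ forms each), or if not — if $p$ and $p''$ use exactly the same $a$ linear forms with multiplicity — then actually $p = p''$ since our products are squarefree in the indices; so they genuinely differ, and the union of their index-sets has size $\ge a+1$, letting me drop the top index $i_r$ and still find $a$ distinct forms among them, giving $p'$ at a lower level dividing $p p''$. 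Once the Schmitt--Vogel hypotheses are checked, the conclusion $ara(I_a(\Lambda)) = ara(\sqrt{I_a(\Lambda)}) \le r = n-a+1$ is immediate, and I would close by remarking that this bound is consistent with Remark~\ref{Hart} (there $n-a+1 = 3$, matching Hartshorne's $ara = 3$) and is generally not tight for usual star configurations, which are complete intersections in favorable ranges.
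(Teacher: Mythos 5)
Your proposal is correct and follows essentially the same route as the paper: both apply the Schmitt--Vogel lemma to the set $P$ of all $a$-fold products $\ell_I$, partitioned into $n-a+1$ classes by an extremal index of $I$ (you group by the largest index, the paper by the smallest, which is the same partition after reversing the order of $\ell_1,\ldots,\ell_n$), with the singleton class being the unique extremal product and the divisibility witness for condition (iii) produced exactly as you sketch, by choosing $a$ indices inside $I\cup I''$ avoiding the shared extremal index so that the resulting product lies in a strictly lower class and divides $p\cdot p''$. No gap; your verification of the key combinatorial step is the same as the paper's.
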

\begin{proof} In \cite[Lemma on p. 249]{ScVo2} the following Lemma is proved: let $R$ be a commutative ring with non-zero identity. Let $P$ be a finite subset of elements of $R$. Let $P_0,\ldots, P_r$ be subsets of $P$ such that
\begin{enumerate}
  \item[(i)] $\displaystyle\bigcup_{l=0}^r P_l=P$;
  \item[(ii)] $P_0$ has exactly one element;
  \item[(iii)] if $p$ and $p''$ are different elements of $P_l (0<l\leq r)$, there is an integer $l'$ with $0\leq l'<l$, and an element $p'\in P_{l'}$ such that $p'|p\cdot p''$.
\end{enumerate} Setting $\displaystyle q_l=\sum_{p\in P_l}p^{e(p)},$ where $e(p)\geq 1$ are arbitrary integers, then $$\sqrt{\langle P\rangle}=\sqrt{\langle q_0,\ldots,q_r\rangle}.$$

Fix $j\in \{1,\ldots,n\}$. We apply the Lemma for the case when $P$ is the set of generators $\{\ell_I|I\subset \{1,\ldots,n\}, |I|=n-j\}$ for the ideal $I_{n-j}(\Lambda)$. We use the standard notation $\displaystyle \ell_I=\prod_{i\in I}\ell_i$.

Consider $$P_0:=\{\ell_{j+1}\cdots\ell_n\},$$ and for $u=1,\ldots,j$, $$P_u:=\{\ell_{j-u+1}\ell_I|I\subset \{j-u+2,\ldots,n\},|I|=n-j-1\}.$$

With $r=j$, properties (i) and (ii) in Lemma above are immediately satisfied.

For property (iii), let $u>0$ and let $p=\ell_{j-u+1}\ell_I$ and $p''=\ell_{j-u+1}\ell_{I''}$, with $I,I''\subset\{j-u+2,\ldots,n\}, |I|=|I''|=n-j-1,$ and $I\neq I''$ (so $p, p''\in P_u$, with $p\neq p''$).

Let $\alpha$ be the smallest element of $I$ and let $\beta$ be the smallest element of $I''$, so $\alpha,\beta\geq j-u+2$. Let $\gamma=\min\{\alpha,\beta\}\geq j-u+2$. Taking $u'=j-\gamma+1$, we have $j-u'+1=\gamma\geq j-u+2$ giving $u\geq u'+1$, hence $u'<u$.

Since $I\neq I''$, then $|I\cup I''|\geq n-j$, and therefore $|I\cup I''|\setminus\{\gamma\}|\geq n-j-1$. Therefore there exists $I'\subseteq I\cup I''\setminus\{\gamma\}$, with $|I'|=n-j-1$. Also $$\min\{i|i\in I'\subseteq I\cup I''\setminus\{\gamma\}\}\geq \gamma+1=j-u'+2,$$ hence $I'\subset\{j-u'+2,\ldots,n\}$.

Everything put together gives that $$p'=\ell_{j-u'+1}\ell_{I'}\in P_{u'}, u'<u,$$ and since $\{\gamma\}\cup I'\subset I\cup I''$, we have $p'=\ell_\gamma\ell_{I'}|\ell_{I\cup I''}|p\cdot p''$. This means that condition (iii) is also satisfied.

Taking $a=n-j$, we proved our result.
\end{proof}

\begin{rem} \label{stci} In \cite{To2}, by somewhat different methods, it is proved that any usual star configuration is s.t.c.i. We mention again that in order to have an usual star configuration, then any $k$ of the linear forms of $\Lambda$ are linearly independent. Let $1\leq c\leq k-1$, and suppose any $c$ of the linear forms of $\Lambda$ are linearly independent. Let $a=n-c+1$. Then any $n-a+1=c$ linear forms of $\Lambda$ are linearly independent which gives that ${\rm ht}(I_a(\Lambda))=n-a+1$. From Theorem \ref{ara}, one obtains that $I_a(\Lambda)$ is s.t.c.i. This way we answer the second question asked at the end of \cite{To2}.
\end{rem}

\subsection{Comments on the arithmetic rank of subspace arrangements.} After his example came to surface, Hartshorne conjectured that every irreducible curve in $\mathbb P^3$ is set-theoretic complete intersection. This conjecture is still unsolved. In regard to reducible varieties, Hartshorne asked some interesting questions derived from his example about the number of equations needed to define (up to radical) reducible varieties (\cite{Ha2}); they were answered in \cite{ScVo1} and \cite{Ly} (the base field is algebraically closed). A very good survey of this very beautiful topic of set-theoretic intersections is \cite{Ly2}.

Combining Theorem \ref{ara} and the proof of Theorem \ref{mixdim} (we have $n-a+1=\aleph$), we immediately have the following result, less known in the literature at least under the form we are presenting it here.

\begin{cor}\label{main} Let $V=V_1\cup\cdots\cup V_m$ be an essential (projective) subspace arrangement whose $m$ irreducible components have codimensions $c_i, i=1,\ldots,m$. Then $$ara(I(V))\leq 1+\sum_{i=1}^m(c_i-1).$$
\end{cor}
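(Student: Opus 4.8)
The plan is to feed the arrangement $V$ into the construction from the proof of \thmref{mixdim} and then apply the upper bound of \thmref{ara} to the resulting generalized star configuration; the key point is that the number $n$ of linear forms produced will drop out of the final estimate, leaving exactly the claimed bound $1+\sum_{i=1}^m(c_i-1)$.

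Concretely, I would first re-run the proof of \thmref{mixdim} on $V=V_1\cup\cdots\cup V_m$: since the ground field is infinite, that construction produces collections $\Lambda_1,\ldots,\Lambda_m$, each of cardinality $\aleph:=1+\sum_{i=1}^m(c_i-1)$, such that any $c_i$ of the forms in $\Lambda_i$ generate $I(V_i)$; it sets $\Lambda:=\bigcup_{i=1}^m\Lambda_i$, $n:=|\Lambda|$, and $a:=n-\aleph+1$; and it establishes that $\sqrt{I_a(\Lambda)}=I(V_1)\cap\cdots\cap I(V_m)=I(V)$. In particular the forms of $\Lambda$ generate $\frak m$ and $a\in\{1,\ldots,n\}$, so \thmref{ara} applies and yields $ara(I_a(\Lambda))\leq n-a+1$. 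By the defining choice $a=n-\aleph+1$ one has the bookkeeping identity $n-a+1=\aleph$, hence $ara(I_a(\Lambda))\leq\aleph=1+\sum_{i=1}^m(c_i-1)$.

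It then remains only to transfer this estimate from $I_a(\Lambda)$ to $I(V)$, which is immediate since $ara$ is an invariant of the radical: a tuple $f_1,\ldots,f_r$ generates $I_a(\Lambda)$ up to radical precisely when $\sqrt{\langle f_1,\ldots,f_r\rangle}=\sqrt{I_a(\Lambda)}=I(V)$, i.e.\ precisely when it generates the (radical) ideal $I(V)$ up to radical; therefore $ara(I(V))=ara(I_a(\Lambda))\leq 1+\sum_{i=1}^m(c_i-1)$. I do not expect a genuine obstacle here — the argument is a clean composition of the two previous results. The only things needing a moment's care are the identity $n-a+1=\aleph$ (already flagged in the surrounding text) and the remark that $n$ itself never enters the conclusion, which is exactly what makes pairing \thmref{mixdim} with \thmref{ara} worthwhile.
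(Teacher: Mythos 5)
Your proposal is correct and is exactly the paper's argument: the paper derives the corollary by combining Theorem \ref{ara} with the construction in the proof of Theorem \ref{mixdim}, using the same bookkeeping identity $n-a+1=\aleph=1+\sum_{i=1}^m(c_i-1)$ and the fact that $\sqrt{I_a(\Lambda)}=I(V)$, so the bound transfers to $I(V)$.
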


The proof of this corollary can be obtained in a different fashion, by applying the same proof of Theorem 5.1 (page 401) in \cite{KiTeYo}, by replacing the independent variables with the linear forms defining each irreducible component $V_i$. This Theorem 5.1 can give an entire class of examples when the bound in Corollary \ref{main} is attained, and generalizes the following example from \cite[Section 3]{ScVo1}: let $V$ be the subspace arrangement of $\mathbb P^{rt-1}$ over an algebraically closed field, with defining ideal $$I(V)=\langle x_1,\ldots,x_t\rangle\cap\langle x_{t+1},\ldots,x_{2t}\rangle\cap\cdots\cap\langle x_{(r-1)t+1},\ldots,x_{rt}\rangle.$$ It is proven that $ara(I(V))=r(t-1)+1$. From our perspective, $m=r$, and $c_i=t$ for all $i=1,\ldots,m$.

\medskip

\begin{rem} \label{rem1} Suppose $k-1\geq c_1\geq\cdots\geq c_m$. After a change of coordinates one can assume that $I(V_1)=\langle x_1,\ldots,x_{c_1}\rangle$. So $I(V)\subset \langle x_1,\ldots,x_{k-1}\rangle R$. Therefore, from \cite[Theorem 2]{EiEv}, we have that $ara(I(V))\leq k-1$. So the upper bound becomes non-trivial if $k$ is very big compared to the number of components and their codimension.
\end{rem}

\begin{ex} Let us go back to Example \ref{example}, for some in-depth analysis.  By Corollary \ref{main}, we have $ara(I)\leq 1+(2-1)+(3-1)=4$, whereas from Remark \ref{rem1} $ara(I)\leq 3$.

The question is if it is possible to interpolate $V$ with $\mathcal V_a(\Lambda)$, such that $n-a+1=3$, where $\Lambda$ consists of $n$ linear forms in $R:=\mathbb C[x,y,z,w]$. If that were the case, then $$\langle x,z,w\rangle=\langle \ell_{i_1},\ell_{i_2},\ell_{i_3}\rangle \mbox{ and }\langle x,y\rangle=\langle \ell_{j_1},\ell_{j_2},\ell_{j_3}\rangle,$$ where $\ell_{i_u},\ell_{j_v}\in\Lambda$, with $i_u$ not necessarily distinct than $j_v$, for some $u,v$'s.

We have $\ell_{i_u}=a_ux+c_uz+d_uw,u=1,2,3, a_u,c_u,d_u\in\mathbb K$, and the determinant of the $3\times 3$ matrix of coefficients of these three linear forms is not equal to 0.

Also $\ell_{j_v}=a'_vx+b_vy,v=1,2,3, a'_v,b_v\in\mathbb K$, are not all proportional. Let $J$ be the linear prime ideal generated by two of the $\ell_{i_u}$'s, both not proportional to $x$, and by one of the $\ell_{j_v}$'s, also not proportional to $x$. $V(J)$ is an irreducible component of $\mathcal V_a(\Lambda)$, but not of $V$. So the answer to our question is NO.
\end{ex}

%%%%%%%%%%%%%%%%%%%%%%%%%%%%%%%%%%%%%%%%%%%%%%%%%%%%%%%%
% Back to single space
\renewcommand{\baselinestretch}{1.0}
\small\normalsize % to get previous line to take
%%%%%%%%%%%%%%%%%%%%%%%%%%%%%%%%%%%%%%%%%%%%%%%%%%%%%%%%

\bibliographystyle{amsalpha}

\end{document}